\newtheorem{theorem}{Theorem}[section]
\newtheorem{lemma}[theorem]{Lemma}
\newtheorem{conjecture}[theorem]{Conjecture}
\begin{document}
	
\title[A Conjecture of Saikia on $t$--Colored Overpartitions]{An Elementary Proof of a Conjecture of Saikia on Congruences for $t$--Colored Overpartitions}

\author{James A. Sellers}
\address{Department of Mathematics and Statistics, University of Minnesota Duluth, Duluth, MN 55812, USA}
\email{jsellers@d.umn.edu}

\subjclass[2010]{11P83, 05A17}
	
\keywords{partitions, congruences, designated summands, generating functions, dissections}
	
\maketitle
\begin{abstract}
The starting point for this work is the family of functions 
$\overline{p}_{-t}(n)$ which counts the number of $t$--colored overpartitions of $n.$   
In recent years, several infinite families of congruences satisfied by $\overline{p}_{-t}(n)$ for specific values of $t\geq 1$ have been proven.    In particular, in his 2023 work,  Saikia proved a number of congruence properties modulo powers of 2 for $\overline{p}_{-t}(n)$ for $t=5,7,11,13$.  He also included the following conjecture in that paper:  
\newline 
\ 
\noindent 
Conjecture: 
For all $n\geq 0$ and primes $t$, we have 
\begin{eqnarray*}
\overline{p}_{-t}(8n+1) &\equiv & 0 \pmod{2}, \\
\overline{p}_{-t}(8n+2) &\equiv & 0 \pmod{4}, \\
\overline{p}_{-t}(8n+3) &\equiv & 0 \pmod{8}, \\
\overline{p}_{-t}(8n+4) &\equiv & 0 \pmod{2}, \\
\overline{p}_{-t}(8n+5) &\equiv & 0 \pmod{8}, \\
\overline{p}_{-t}(8n+6) &\equiv & 0 \pmod{8}, \\
\overline{p}_{-t}(8n+7) &\equiv & 0 \pmod{32}.
\end{eqnarray*}
Using a truly elementary approach, relying on classical generating function manipulations and dissections, as well as proof by induction, we show that Saikia's conjecture holds for {\bf all} odd integers $t$ (not necessarily prime).

\end{abstract}

\section{Introduction}  

A {\it partition} of a positive integer $n$ is a finite non--increasing sequence of positive integers $\lambda_1 \geq \lambda_2 \geq \dots \geq \lambda_k$ such that $\lambda_1 + \lambda_2 + \dots + \lambda_k = n.$  We refer to the integers $\lambda_1, \lambda_2, \dots, \lambda_k$ as the {\it parts} of the partition.  For example, the number of partitions of the integer $n=4$ is 5, and the partitions counted in that instance are as follows:  
$$4, \ \ \ 3+1, \ \ \ 2+2, \ \ \ 2+1+1, \ \ \ 1+1+1+1$$
An {\it overpartition} of a positive integer $n$ is a partition of $n$ wherein the first occurrence of a part may be overlined.  
For example, the number of overpartitions of $n=4$ is 14 given the following list of overpartitions of 4:  
$$
4, \ \ \ \overline{4}, \ \ \ 3+1, \ \ \ \overline{3}+1, \ \ \ 3+\overline{1}, \ \ \ \overline{3}+\overline{1}, 
$$
$$
2+2, \ \ \  \overline{2} + 2, \ \ \ 2+1+1, \ \ \  \overline{2}+1+1, \ \ \  2+\overline{1}+1, \ \ \  \overline{2}+\overline{1}+1, 
$$
$$
1+1+1+1, \ \ \ \overline{1}+1+1+1
$$
The number of overpartitions of $n$ is often denoted $\overline{p}(n),$ so from the example above we see that $\overline{p}(4) = 14.$  
As noted by Corteel and Lovejoy \cite{CL}, the generating function for $\overline{p}(n)$ is given by 
\begin{equation*}
\sum_{n\geq 0} \overline{p}(n)q^n = \frac{f_2}{f_1^{2}}
\end{equation*}
where $f_r = (1-q^r)(1-q^{2r})(1-q^{3r})\dots$ for any positive integer $r.$   Several authors have proven congruences modulo powers of 2 satisfied by $\overline{p}(n)$; see \cite{CHSZ, HS_INT, HS_JCMCC, BK_INT, BK_DM, KM} for examples of such work.  

As a natural generalization, we define $\overline{p}_{-t}(n)$ to be the number of $t$--colored overpartitions of $n.$   (Note that some authors denote this family of functions as $\overline{p}_{t}(n).$)  
Saikia \cite{MS} notes that the generating function for $\overline{p}_{-t}(n)$ is given by 
\begin{equation}
\label{originalgenfn}
\sum_{n\geq 0} \overline{p}_{-t}(n)q^n = \frac{f_2^t}{f_1^{2t}} = \left( \frac{f_2}{f_1^{2}} \right)^t.
\end{equation}
In recent years, infinite families of congruences satisfied by $\overline{p}_{-t}(n)$ for specific values of $t>1$ have been proven; as examples, see \cite{KSV, BK_DM2, NN, MS, LW}.  In his recent work, Saikia \cite{MS} also included the following conjecture (in addition to proving a number of congruence properties modulo powers of 2 for $\overline{p}_{-t}(n)$ for $t=5,7,11,13$).

\begin{conjecture}
(Saikia, \cite{MS})  For all $n\geq 0$ and primes $t$, we have 
\begin{eqnarray*}
\overline{p}_{-t}(8n+1) &\equiv & 0 \pmod{2}, \\
\overline{p}_{-t}(8n+2) &\equiv & 0 \pmod{4}, \\
\overline{p}_{-t}(8n+3) &\equiv & 0 \pmod{8}, \\
\overline{p}_{-t}(8n+4) &\equiv & 0 \pmod{2}, \\
\overline{p}_{-t}(8n+5) &\equiv & 0 \pmod{8}, \\
\overline{p}_{-t}(8n+6) &\equiv & 0 \pmod{8}, \\
\overline{p}_{-t}(8n+7) &\equiv & 0 \pmod{32}.
\end{eqnarray*}
\end{conjecture}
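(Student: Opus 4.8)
The plan is to recast \eqref{originalgenfn} in theta-function form and then read off the residues modulo $32$ by performing two successive $2$-dissections, each followed by a binomial expansion. Fix an odd integer $t\ge 1$ and work throughout in $\mathbb{Z}[[q]]$ modulo $32$. I would use the classical identities $\varphi(-q)=f_1^{2}/f_2$, where $\varphi(q)=\sum_{n\in\mathbb{Z}}q^{n^2}$, and $\psi(q)=\sum_{n\ge 0}q^{n(n+1)/2}=f_2^{2}/f_1$ — so that $\sum_{n\ge 0}\overline{p}_{-t}(n)q^{n}=\varphi(-q)^{-t}$ — together with the $2$-dissection $\varphi(\pm q)=\varphi(q^4)\pm 2q\,\psi(q^8)$. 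Since $f_1^{2}\equiv f_2\pmod 2$, we get $\varphi(-q)^{-t}\equiv 1\pmod 2$, whence $\overline{p}_{-t}(m)\equiv 0\pmod 2$ for every $m\ge 1$; in particular the congruences for $8n+1$ and $8n+4$ follow at once, leaving the five progressions $8n+2,\,8n+3,\,8n+5,\,8n+6,\,8n+7$.

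First I would substitute $\varphi(-q)=\varphi(q^4)-2q\psi(q^8)$ and expand by the binomial series:
\[
\varphi(-q)^{-t}=\sum_{k\ge 0}\binom{t+k-1}{k}2^{k}\,q^{k}\,\psi(q^8)^{k}\,\varphi(q^4)^{-t-k}.
\]
Here $32\mid 2^{k}$ once $k\ge 5$, and the $k$-th summand is $q^{k}$ times a power series in $q^{4}$, hence is supported on exponents $\equiv k\pmod 4$. Therefore, extracting the progression $n\equiv j\pmod 4$ for $j\in\{1,2,3\}$ leaves only the term $k=j$, and replacing $q^{4}$ by $q$ gives
\[
\sum_{N\ge 0}\overline{p}_{-t}(4N+j)\,q^{N}\;\equiv\;2^{j}\binom{t+j-1}{j}\,\psi(q^2)^{j}\,\varphi(q)^{-(t+j)}\pmod{32}\qquad(j=1,2,3).
\]
The cases $j=2,3$ already give $\overline{p}_{-t}(4N+2)\equiv 0\pmod 4$ and $\overline{p}_{-t}(4N+3)\equiv 0\pmod 8$, which settle the progressions $8n+2$ and $8n+3$.

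The remaining three congruences concern the odd-indexed coefficients of $g_j(q):=\psi(q^2)^{j}\varphi(q)^{-(t+j)}$ for $j=1,2,3$, since $8n+5$, $8n+6$, $8n+7$ are exactly $4N+j$ with $N$ odd. As $\psi(q^2)^{j}$ is a power series in $q^2$, the odd part of $g_j$ equals $\psi(q^2)^{j}$ times the odd part of $\varphi(q)^{-(t+j)}$. Substituting $\varphi(q)=\varphi(q^4)+2q\psi(q^8)$ and expanding again,
\[
\varphi(q)^{-(t+j)}=\sum_{k\ge 0}\binom{t+j+k-1}{k}(-1)^{k}2^{k}\,q^{k}\,\psi(q^8)^{k}\,\varphi(q^4)^{-(t+j)-k},
\]
and once more the $k$-th summand is supported on exponents $\equiv k\pmod 4$; so the odd part is the sum over odd $k$, the term $k=1$ having all coefficients divisible by $2(t+j)$ and every term with odd $k\ge 3$ having all coefficients divisible by $8$. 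Since $t$ is odd, $t+j$ is even when $j=1$ or $j=3$, so for those two values the odd part of $\varphi(q)^{-(t+j)}$ — and hence of $g_j$ — is divisible by $4$, while for $j=2$ it is divisible by $2$. Combining this with the prefactors $2t$, $4\binom{t+1}{2}$, $8\binom{t+2}{3}$ in the previous display (taken with $N$ odd) yields $\overline{p}_{-t}(8n+5)\equiv 0\pmod{8}$, $\overline{p}_{-t}(8n+6)\equiv 0\pmod{8}$, and $\overline{p}_{-t}(8n+7)\equiv 0\pmod{32}$, completing the proof.

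Each step is a routine generating-function computation, so the real content is the $2$-adic bookkeeping; the one point that needs care is $8n+6$, where $t+2$ is odd and the second dissection supplies only a single factor of $2$ from $\varphi(q)^{-(t+2)}$ — the argument still closes because that progression already carried the factor $4\binom{t+1}{2}$. One could instead phrase the whole thing as an induction on the odd parameter $t$ via $\varphi(-q)^{-(t+2)}=\varphi(-q)^{-t}\,\varphi(-q)^{-2}$, but the direct route above is shorter.
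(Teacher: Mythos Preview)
Your argument is correct. The two binomial expansions are carried out carefully, and the $2$-adic bookkeeping closes in every case; in particular your treatment of $8n+6$ (where $t+2$ is odd and only one extra factor of $2$ is available from the second dissection) is handled properly because the prefactor $4\binom{t+1}{2}$ already supplies the remaining factor. The integrality of $\varphi(q^4)^{-s}$ as a power series and the support condition ``the $k$-th summand lives in residue class $k\pmod 4$'' are both valid, so the extraction of progressions modulo $4$ and then modulo $2$ is legitimate.

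Your route is genuinely different from the paper's. The paper invokes the infinite product $\varphi(-q)^{-1}=\prod_{i\ge 0}\phi(q^{2^i})^{2^i}$, peels off the factor $(\phi(q)\phi(q^2)^2\phi(q^4)^4)^t$ against a function of $q^8$, and then proves the required $8$-dissection divisibilities by \emph{induction on odd $t$}, multiplying by $(\phi(q)\phi(q^2)^2\phi(q^4)^4)^2$ at each step; both the base case and the induction step are verified by a computer-algebra expansion modulo $32$. You instead expand $\varphi(-q)^{-t}$ directly via the negative binomial series after substituting the $2$-dissection $\varphi(-q)=\varphi(q^4)-2q\psi(q^8)$, and then repeat the maneuver on $\varphi(q)^{-(t+j)}$. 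This treats all odd $t$ simultaneously, avoids both the infinite-product identity and the induction, and requires no machine computation: the only inputs are the $2$-dissection of $\varphi$ and the parity of $t+j$. The paper's approach has the advantage of being mechanical and easily extended to finer dissections (as the authors note), while yours is shorter and makes the source of each power of $2$ transparent. Your final remark that one could recast the argument as an induction via $\varphi(-q)^{-(t+2)}=\varphi(-q)^{-t}\varphi(-q)^{-2}$ is exactly the paper's strategy in disguise.
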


Our primary goal in this note is to prove the following result which generalizes Saikia's conjecture mentioned above:  

\begin{theorem}
\label{mainthm}
For all $n\geq 0$ and all odd integers $t\geq 1,$ we have 
\begin{eqnarray*}
\overline{p}_{-t}(8n+1) &\equiv & 0 \pmod{2}, \\
\overline{p}_{-t}(8n+2) &\equiv & 0 \pmod{4}, \\
\overline{p}_{-t}(8n+3) &\equiv & 0 \pmod{8}, \\
\overline{p}_{-t}(8n+4) &\equiv & 0 \pmod{2}, \\
\overline{p}_{-t}(8n+5) &\equiv & 0 \pmod{8}, \\
\overline{p}_{-t}(8n+6) &\equiv & 0 \pmod{8}, \\
\overline{p}_{-t}(8n+7) &\equiv & 0 \pmod{32}.
\end{eqnarray*}
\end{theorem}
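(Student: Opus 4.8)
The plan is to prove the seven congruences simultaneously by an induction on the odd integer $t$, with base case $t=1$ and inductive step $t\mapsto t+2$. Write $A:=f_2/f_1^2$, so that $\sum_{n\ge 0}\overline{p}_{-t}(n)q^n=A^t$ by \eqref{originalgenfn}. The two workhorses are: (i) the classical $2$-dissections
\begin{equation*}
\frac{f_2}{f_1^2}=\frac{f_8^5}{f_2^4 f_{16}^2}+2q\,\frac{f_4^2 f_{16}^2}{f_2^4 f_8},\qquad
\frac{f_2^2}{f_1^4}=\frac{f_4^{14}}{f_2^{12} f_8^4}+4q\,\frac{f_4^2 f_8^4}{f_2^8},
\end{equation*}
both standard consequences of the Jacobi triple product; and (ii) the elementary congruences $f_1^{2^k}\equiv f_2^{2^{k-1}}\pmod{2^k}$ for $k\ge 1$, which I would prove by induction on $k$ by expanding $(1-q)^{2^k}=\bigl((1-q^2)-2q\bigr)^{2^{k-1}}$ with the binomial theorem and checking that every cross term is divisible by $2^k$. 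Congruence (ii) yields the power-series identities $A^{2}\equiv 1\pmod 4$, $A^{4}\equiv 1\pmod 8$, $A^{8}\equiv 1\pmod{16}$, $A^{16}\equiv 1\pmod{32}$, and more generally $A^{2^{k-1}}\equiv 1\pmod{2^k}$.

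From the second dissection one reads off $A^2=1+4B$ for a specific integer power series $B=W(q^2)+q\,\mathcal R(q^2)$, where $4q\,\mathcal R(q^2)$ is the odd part of $A^2$ (so $\mathcal R=f_2^2 f_4^4/f_1^8$) and $4W(q^2)$ is its even part minus $1$. Since $t$ is odd, $A^t=A\cdot(A^2)^{(t-1)/2}=A\,(1+4B)^{(t-1)/2}$; expanding the binomial and discarding terms that vanish modulo $32$ reduces the whole theorem to finitely many assertions about the coefficients of the three fixed series $A$, $AB$, and $AB^2$ in the residue classes $8n+1,\dots,8n+7$ (modulo $32$, $8$, and $2$, respectively). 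The series $AB^2$ is handled instantly: modulo $2$ the Frobenius relation gives $B^2\equiv B(q^2)$, a function of $q^2$, so $[q^{8n+r}](AB^2)\equiv 0\pmod 2$ for every odd $r$. The base case $t=1$ — the seven congruences for $\overline{p}(n)=\overline{p}_{-1}(n)$ — follows from the first dissection applied twice together with (ii): e.g.\ $\sum_m \overline{p}(2m+1)q^m=2\,f_2^2 f_8^2/(f_1^4 f_4)$, and replacing $1/f_1^4$ by $A^2/f_2^2$ and reducing via the dissection of $A^2$ leaves, in each relevant progression, a series supported on exponents incompatible with the target class; the modulus $32$ in the class $8n+7$ emerges because the odd part of $A^2$ carries a factor $4$ and the odd part of $A$ a factor $2$.

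The substantive point — and the step I expect to be the main obstacle — is the analysis of $AB$, specifically proving $[q^{8n+7}](AB)\equiv 0\pmod 8$. Since $AB=(A^3-A)/4$, this is, in view of the base case, essentially the assertion $\overline{p}_{-3}(8n+7)\equiv 0\pmod{32}$ (by contrast, the classes $8n+3,8n+5,8n+6$ only need the much weaker $[q^{8n+r}](AB)\equiv 0\pmod 2$, a routine consequence of the shape of $B$, since modulo $2$ the odd-index coefficients of $B$ are supported on exponents $\equiv 1\pmod 8$). To extract the full power $8$ here, one dissection does not suffice: writing $AB$ out via the dissection of $A$ and the decomposition $B=W(q^2)+q\,\mathcal R(q^2)$ produces four summands, two of which are functions of $q^2$ (hence contribute nothing to $8n+7$), one of which collapses after the $f_1^{2^k}$-reductions to a series on the wrong classes, and one of which — the summand carrying $W=\tfrac14\bigl(f_2^{14}/(f_1^{12}f_4^4)-1\bigr)$ — forces control of $\overline{p}_{-2}(2n)$ modulo $16$ in a progression, obtained in turn by one further dissection of $f_2^2/f_1^4$. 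Once these finitely many coefficient facts about $A$, $AB$, $AB^2$ are in hand, the induction on odd $t$ is immediate: multiplying by $A^2=1+4B$ changes $\overline{p}_{-t}(8n+r)$ by $4\,[q^{8n+r}](A^tB)$, and modulo the relevant power of $2$ this error is the combination of the coefficients just controlled. The entire argument stays inside $\mathbb{Z}[[q]]$ and uses nothing beyond classical dissections, the binomial theorem, and induction, as promised.
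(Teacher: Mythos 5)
Your reduction scheme is sound and genuinely different from the paper's route: the paper rewrites the generating function as $\bigl(\prod_{i\ge 0}\phi(q^{2^i})^{2^i}\bigr)^t$, splits off $\bigl(\phi(q)\phi(q^2)^2\phi(q^4)^4\bigr)^t$ (the tail being a function of $q^8$), and proves the required divisibilities of the $8$--dissection by induction on odd $t$ with explicit expansions modulo $32$, whereas you set $A=f_2/f_1^2$, write $A^2=1+4B$, and expand $A^t=A(1+4B)^{(t-1)/2}$ modulo $32$, which correctly reduces the whole theorem to the $t=1$ case plus finitely many coefficient facts about $AB$ and $AB^2$ (and, done this way, no induction on $t$ is even needed). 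The $AB^2$ step via $B^2\equiv B(q^2)\pmod 2$ and $A\equiv 1\pmod 2$ is fine.

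However, there is a genuine gap exactly where you say you expect one: the pivotal congruence $[q^{8n+7}](AB)\equiv 0\pmod 8$ is never proved. Since $4AB=A^3-A$, this is essentially the $t=3$ instance of the theorem in the class $8n+7$, i.e.\ it carries all of the difficulty beyond $t=1$, and what you offer is only a route map (``one summand collapses after the $f_1^{2^k}$--reductions,'' ``forces control of $\overline{p}_{-2}(2n)$ modulo $16$ \dots obtained by one further dissection'') with none of the dissections carried out and no resulting eta--quotient congruence verified; until that computation is executed, the proposal is a plan rather than a proof, whereas the paper closes the analogous step by brute-force expansion of $\phi(q)\phi(q^2)^2\phi(q^4)^4$ and of the induction-step product modulo $32$. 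Two smaller holes: the $t=1$ base case (notably $\overline{p}(8n+7)\equiv 0\pmod{32}$) is likewise only sketched rather than proved or cited; and your justification of $[q^{8n+6}](AB)\equiv 0\pmod 2$ does not follow from the stated fact that the odd part of $B$ is supported modulo $2$ on exponents $\equiv 1\pmod 8$, because the class $8n+6$ is governed by the even part $W(q^2)$ — you need the separate (true, and provable by one more reduction, e.g.\ showing $W\equiv \tfrac14\bigl(f_2^8/f_4^4-1\bigr)+B\pmod 2$ with the first piece a function of $q^2$) statement that $[q^{4n+3}]W$ is even.
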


In contrast to the proof approach used by Saikia (which relies heavily on modular forms and the implementation of Smoot \cite{NS} of the algorithm of Radu \cite{SR} for proving such results), our proof of Theorem \ref{mainthm} is truly elementary, utilizing classical $q$--series results and generating function manipulations.  Indeed, the key to our proof revolves around a different way to view the generating function for $\overline{p}_{-t}(n)$ along with properties of expanding polynomials and the impact of the binomial coefficients and powers of 2 that arise.   

In order to complete the proof of Theorem \ref{mainthm}, we require three well--known lemmas which involve Ramanujan's functions 
$$\phi(q):=\sum_{k=-\infty}^\infty q^{k^2}   \textrm{\ \ \ \  and\ \ \ \ } \psi(q):=\sum_{k=0}^\infty q^{k(k+1)/2}.$$

\begin{lemma}
\label{2-dissection}
We have 
$$\phi(q) = \phi(q^4) + 2q\psi(q^8).$$  
\end{lemma}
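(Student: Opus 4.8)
The plan is to prove the identity directly from the defining bilateral series $\phi(q)=\sum_{k=-\infty}^{\infty}q^{k^2}$, separating the terms with even summation index from those with odd summation index. First I would write $k=2m$ in the even case; this contributes $\sum_{m=-\infty}^{\infty}q^{4m^2}=\phi(q^4)$, which already accounts for the first term on the right-hand side.

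For the odd-index terms I would set $k=2m+1$ and complete the square: since $k^2=(2m+1)^2=4m^2+4m+1=4m(m+1)+1$, the odd part equals $q\sum_{m=-\infty}^{\infty}q^{4m(m+1)}$. Observing that $4m(m+1)=8\cdot\frac{m(m+1)}{2}$, this becomes $q\sum_{m=-\infty}^{\infty}\bigl(q^{8}\bigr)^{m(m+1)/2}$, so it only remains to identify this bilateral sum with $2\psi(q^8)$.

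That identification is the one step that needs a little care, and it will be the "main obstacle" (such as it is). The key is the substitution $m\mapsto -m-1$, which leaves the exponent $\tfrac{m(m+1)}{2}$ unchanged while interchanging the ranges $m\ge 0$ and $m\le -1$; hence each value $\tfrac{m(m+1)}{2}$ with $m\ge 0$ is attained exactly twice in the bilateral sum, giving $\sum_{m=-\infty}^{\infty}\bigl(q^{8}\bigr)^{m(m+1)/2}=2\sum_{m\ge 0}\bigl(q^{8}\bigr)^{m(m+1)/2}=2\psi(q^8)$. Adding the even and odd contributions then yields $\phi(q)=\phi(q^4)+2q\psi(q^8)$, as claimed. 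There is no genuine difficulty here: this is a classical $2$-dissection of $\phi$, and the only thing to watch is the bookkeeping of the index shift in the odd-index sum.
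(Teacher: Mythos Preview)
Your proof is correct. The paper does not actually prove this lemma at all; it merely cites Hirschhorn's book \cite[(1.9.4)]{H}. Your argument is the standard direct derivation: split the bilateral theta series $\phi(q)=\sum_{k\in\mathbb{Z}}q^{k^2}$ according to the parity of $k$, identify the even-index piece as $\phi(q^4)$, and use the involution $m\mapsto -m-1$ on the triangular-number exponents to recognize the odd-index piece as $2q\psi(q^8)$. This is exactly the kind of elementary manipulation one finds in Hirschhorn's book, so in spirit you are supplying what the citation points to. The only advantage of the paper's approach is brevity; yours has the advantage of being self-contained.
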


\begin{proof}
See Hirschhorn \cite[(1.9.4)]{H}.  
\end{proof}

\begin{lemma}
\label{phi_neg_q}
We have 
$$
{\phi(-q)} = \frac{f_1^2}{f_2}.  
$$
\end{lemma}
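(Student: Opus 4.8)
The plan is to obtain the product representation of $\phi(-q)$ directly from the Jacobi triple product identity
$$\sum_{k=-\infty}^\infty z^k q^{k^2} = \prod_{n\geq 1}(1-q^{2n})(1+zq^{2n-1})(1+z^{-1}q^{2n-1}).$$
The first step is the parity observation that, since $k^2 \equiv k \pmod 2$, we have
$$\phi(-q) = \sum_{k=-\infty}^\infty (-q)^{k^2} = \sum_{k=-\infty}^\infty (-1)^{k^2} q^{k^2} = \sum_{k=-\infty}^\infty (-1)^k q^{k^2},$$
which is exactly the left-hand side of the triple product specialized at $z=-1$. Substituting $z=-1$ on the right-hand side then gives
$$\phi(-q) = \prod_{n\geq 1}(1-q^{2n})(1-q^{2n-1})^2.$$

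The second step is to rewrite both factors in terms of the functions $f_r$. By definition $\prod_{n\geq 1}(1-q^{2n}) = f_2$, and splitting the product defining $f_1$ into its even- and odd-indexed parts yields $\prod_{n\geq 1}(1-q^{2n-1}) = f_1/f_2$. Hence
$$\phi(-q) = f_2\left(\frac{f_1}{f_2}\right)^2 = \frac{f_1^2}{f_2},$$
which is the desired identity.

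I do not expect any real obstacle here; the only point that deserves a moment's care is the parity identity $(-1)^{k^2}=(-1)^k$, which is what licenses matching $\phi(-q)$ with the $z=-1$ case of the triple product, after which everything is routine bookkeeping. If one wished to sidestep the triple product entirely, an alternative route is to start from the known product form $\phi(q) = f_2^5/(f_1^2 f_4^2)$ together with the standard replacements $f_1(-q) = f_2^3/(f_1 f_4)$, $f_2(-q) = f_2$, $f_4(-q) = f_4$, and simplify; the triple-product argument above is, however, shorter and self-contained. Since this is a classical fact, one could also simply cite Hirschhorn \cite{H}, as was done for the preceding lemmas.
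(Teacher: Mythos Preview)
Your argument is correct: the parity observation $(-1)^{k^2}=(-1)^k$ lets you specialize the Jacobi triple product at $z=-1$, and the rewriting $\prod_{n\ge 1}(1-q^{2n-1})=f_1/f_2$ then yields $\phi(-q)=f_2\cdot(f_1/f_2)^2=f_1^2/f_2$ immediately.

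By way of comparison, the paper does not prove this lemma at all; it simply cites Hirschhorn \cite[(1.5.8)]{H}, exactly as you suggest one could do in your closing remark. Your triple-product derivation is essentially the standard proof one would find behind that citation, so there is no substantive difference in mathematical content---you have just unpacked what the paper leaves as a reference. The alternative route you sketch via $\phi(q)=f_2^5/(f_1^2 f_4^2)$ and the substitution $q\mapsto -q$ also works, but as you note it is longer and requires an auxiliary identity for $f_1(-q)$ that itself comes from the triple product, so your primary argument is the cleaner choice.
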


\begin{proof}
See Hirschhorn \cite[(1.5.8)]{H}.  
\end{proof}

\begin{lemma}
\label{infprod}
We have 
$$
\frac{1}{\phi(-q)} = \prod_{i\geq 0} \phi(q^{2^i})^{2^i}.
$$
\end{lemma}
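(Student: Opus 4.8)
The plan is to prove Lemma~\ref{infprod} by a telescoping argument built on the 2-dissection of Lemma~\ref{2-dissection}. The starting observation is that Lemma~\ref{phi_neg_q} gives $\phi(-q) = f_1^2/f_2$, and replacing $q$ by $-q$ in the classical identities for $f_1$ and $f_2$ (or, more directly, using the product form $\phi(-q) = (q;q)_\infty^2/(q^2;q^2)_\infty$ together with the Jacobi triple product expression $\phi(q) = (-q;q^2)_\infty^2 (q^2;q^2)_\infty$) yields the key functional relation
\begin{equation*}
\phi(q)\,\phi(-q) = \phi(q^2)^2 .
\end{equation*}
Equivalently, $\dfrac{1}{\phi(-q)} = \dfrac{\phi(q)}{\phi(q^2)^2}$. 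This is the engine of the whole argument: it expresses $1/\phi(-q)$ as a single factor $\phi(q)$ times $1/\phi(q^2)^2$, i.e.\ the same shape of expression with $q$ replaced by $q^2$ and the exponent doubled.

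Next I would iterate. Applying the relation with $q \mapsto q^2$ gives $\dfrac{1}{\phi(-q^2)^2} = \dfrac{\phi(q^2)^2}{\phi(q^4)^4}$, and inductively, replacing $q$ by $q^{2^i}$ and raising to the $2^i$-th power,
\begin{equation*}
\frac{1}{\phi(-q^{2^i})^{2^i}} = \frac{\phi(q^{2^i})^{2^i}}{\phi(q^{2^{i+1}})^{2^{i+1}}} .
\end{equation*}
Therefore the partial product telescopes:
\begin{equation*}
\prod_{i=0}^{N} \phi(q^{2^i})^{2^i}
= \frac{\phi(q^{2^{N+1}})^{2^{N+1}}}{\phi(-q)} .
\end{equation*}
Now let $N \to \infty$. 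Since $\phi(x) = 1 + 2x + 2x^4 + \cdots$ has constant term $1$ and every other term has positive $q$-valuation, the factor $\phi(q^{2^{N+1}})^{2^{N+1}}$ tends to $1$ in the sense of formal power series: any fixed coefficient of $q^n$ stabilizes once $2^{N+1} > n$, because $\phi(q^{2^{N+1}})^{2^{N+1}} \equiv 1 \pmod{q^{2^{N+1}}}$. Hence the infinite product $\prod_{i \geq 0} \phi(q^{2^i})^{2^i}$ converges as a formal power series and equals $1/\phi(-q)$, which is the claim. (One should record here why Lemma~\ref{2-dissection} is even needed: it is exactly what is used to establish the functional equation $\phi(q)\phi(-q) = \phi(q^2)^2$, since substituting $q \mapsto -q$ in Lemma~\ref{2-dissection} and multiplying the two versions causes the $\psi(q^8)$ terms to combine and collapse.)

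I expect the main obstacle to be nothing more than bookkeeping: verifying the functional equation $\phi(q)\phi(-q) = \phi(q^2)^2$ cleanly (either via triple-product factorizations or via Lemma~\ref{2-dissection}) and then being careful about convergence of the infinite product in the formal-power-series topology. Neither step is deep, but the convergence argument—pinning down that each coefficient stabilizes because the tail factor is $\equiv 1 \pmod{q^{2^{N+1}}}$—is the one place where a reader could object if it is glossed over, so I would state it explicitly rather than wave at it.
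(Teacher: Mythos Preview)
Your overall strategy---telescope using a multiplicative relation between $\phi(q)\phi(-q)$ and a square in $q^2$---is exactly the standard route (the paper itself simply cites Hirschhorn), but the key identity is misstated, and as written the telescoping does not actually link up.

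The correct functional equation is
\[
\phi(q)\,\phi(-q)=\phi(-q^2)^2,
\]
not $\phi(q^2)^2$. (Check the $q^2$ coefficient: $\phi(q)\phi(-q)=1-4q^2+\cdots$, while $\phi(q^2)^2=1+4q^2+\cdots$.) With your version, the first step gives $\dfrac{1}{\phi(-q)}=\dfrac{\phi(q)}{\phi(q^2)^2}$, which has $\phi(q^2)$ (no minus sign) in the denominator; but the next step you write is about $\dfrac{1}{\phi(-q^2)^2}$, so the two do not chain. With the correct identity one gets
\[
\frac{1}{\phi(-q)}=\frac{\phi(q)}{\phi(-q^2)^2},\qquad
\frac{1}{\phi(-q^{2^i})^{2^i}}=\frac{\phi(q^{2^i})^{2^i}}{\phi(-q^{2^{i+1}})^{2^{i+1}}},
\]
and the product telescopes to
\[
\prod_{i=0}^{N}\phi(q^{2^i})^{2^i}=\frac{\phi(-q^{2^{N+1}})^{2^{N+1}}}{\phi(-q)},
\]
after which your convergence argument (the tail factor is $\equiv 1 \pmod{q^{2^{N+1}}}$) finishes the job unchanged. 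Your parenthetical about deriving the functional equation from Lemma~\ref{2-dissection} also needs adjustment: multiplying $\phi(q)=\phi(q^4)+2q\psi(q^8)$ by its $q\mapsto -q$ version gives $\phi(q^4)^2-4q^2\psi(q^8)^2$, which is not visibly $\phi(-q^2)^2$ without further identities; the cleanest derivation is directly from Lemma~\ref{phi_neg_q}, since $\phi(-q)=f_1^2/f_2$ and $\phi(q)=f_2^5/(f_1^2 f_4^2)$ multiply to $f_2^4/f_4^2=\phi(-q^2)^2$.
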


\begin{proof}
See Hirschhorn \cite[(1.5.16)]{H}.  
\end{proof}


\section{Proof of Theorem \ref{mainthm}}

We begin our proof of Theorem \ref{mainthm} by making the following key observation about the generating function for $\overline{p}_{-t}(n)$:  

\begin{theorem} 
\label{altgenfn}
For all $t\geq 1,$ 
$$
\sum_{n\geq 0} \overline{p}_{-t}(n)q^n = \left(  \prod_{i\geq 0} \phi(q^{2^i})^{2^i} \right)^t.
$$
\end{theorem}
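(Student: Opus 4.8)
The plan is to recast the right-hand side of \eqref{originalgenfn} using the three lemmas just recorded. Starting from $\sum_{n\geq 0} \overline{p}_{-t}(n)q^n = \left( f_2/f_1^2 \right)^t$, the key is to recognize $f_2/f_1^2$ as the reciprocal of $\phi(-q)$. Indeed, Lemma \ref{phi_neg_q} gives $\phi(-q) = f_1^2/f_2$, so immediately $f_2/f_1^2 = 1/\phi(-q)$, and hence
\[
\sum_{n\geq 0} \overline{p}_{-t}(n)q^n = \left( \frac{1}{\phi(-q)} \right)^t.
\]
Then I would apply Lemma \ref{infprod}, which expresses $1/\phi(-q)$ as the infinite product $\prod_{i\geq 0} \phi(q^{2^i})^{2^i}$, and raise both sides to the $t$-th power to obtain the claimed identity.

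The two substitutions above are the entire argument; the proof is essentially a chain of equalities citing the lemmas. The only point that warrants a sentence of justification is that the manipulations are legitimate as formal power series in $q$: both $f_2/f_1^2$ and the infinite product $\prod_{i\geq 0} \phi(q^{2^i})^{2^i}$ are well-defined elements of $\mathbb{Z}[[q]]$ with constant term $1$ (for the product, note that $\phi(q^{2^i})^{2^i} = 1 + O(q^{2^i})$, so only finitely many factors contribute to each coefficient), so raising to the power $t$ and comparing coefficients is unproblematic. I do not anticipate any genuine obstacle here — this theorem is a bookkeeping reformulation whose purpose is to set up the inductive argument on $t$ that follows, where the binomial expansion of the $t$-th power of the product will be analyzed modulo powers of $2$.

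Concretely, the write-up I have in mind is just: by Lemma \ref{phi_neg_q}, rewrite \eqref{originalgenfn} as $(1/\phi(-q))^t$; by Lemma \ref{infprod}, substitute the infinite product for $1/\phi(-q)$; conclude. I would keep it to three or four lines and not belabor the convergence remark beyond noting that each coefficient depends on only finitely many of the factors $\phi(q^{2^i})^{2^i}$.
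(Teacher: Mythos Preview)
Your proposal is correct and matches the paper's own proof essentially line for line: the paper simply states that the theorem follows immediately from \eqref{originalgenfn} together with Lemmas~\ref{phi_neg_q} and~\ref{infprod}, which is exactly the chain of substitutions you describe (Lemma~\ref{2-dissection} is not needed here, so your reference to ``the three lemmas'' is a slight overstatement). Your added remark on formal power series convergence is harmless extra care but not required.
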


\begin{proof}
This result follows immediately from (\ref{originalgenfn}) as well as Lemmas \ref{phi_neg_q} and \ref{infprod}.  
\end{proof}

Thus, we see that 
$$
\sum_{n\geq 0} \overline{p}_{-t}(n)q^n = \left(\phi(q)\phi(q^2)^2\phi(q^4)^4\right)^t \left(  \prod_{i\geq 3} \phi(q^{2^i})^{2^i} \right)^t
$$
where we have chosen to split off only the first three terms of the infinite product in Theorem \ref{altgenfn} because, in essence, Theorem \ref{mainthm} is a statement about the 8--dissection of the generating function for $\overline{p}_{-t}(n)$.  And since 
$$
\left(  \prod_{i\geq 3} \phi(q^{2^i})^{2^i} \right)^t
$$
is a function of $q^8$, it's role is not all that significant for the remainder of this work.  With this as our backdrop, we now proceed to prove the following theorem which will imply Theorem \ref{mainthm} (and handle all of the cases of that theorem simultaneously).  

\begin{theorem}
\label{mainthm2} 
For all odd $t\geq 1,$ 
$$
\left(\phi(q)\phi(q^2)^2\phi(q^4)^4  \right)^t = \left( \sum_{j=0}^7 a_{t,j}q^jF_{t,j}(q^8) \right)  
$$
\noindent where $F_{t,j}(q^8)$ is a function of $q^8$ whose power series representation has integer coefficients, and the following divisibilities hold:  
\begin{eqnarray*}
a_{t, 1} &\equiv & 0 \pmod{2}, \\
a_{t, 2} &\equiv & 0 \pmod{4}, \\
a_{t, 3} &\equiv & 0 \pmod{8}, \\
a_{t, 4} &\equiv & 0 \pmod{2}, \\
a_{t, 5} &\equiv & 0 \pmod{8}, \\
a_{t, 6} &\equiv & 0 \pmod{8}, \\
a_{t, 7} &\equiv & 0 \pmod{32}.
\end{eqnarray*}
\end{theorem}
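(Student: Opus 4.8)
The plan is to induct on odd $t$, stepping by $2$, with base case $t=1$. For the base case, I would use Lemma~\ref{2-dissection} to write $\phi(q) = \phi(q^4) + 2q\psi(q^8)$, and similarly $\phi(q^2) = \phi(q^8) + 2q^2\psi(q^{16})$, so that $\phi(q^2)^2 = \phi(q^8)^2 + 4q^2\phi(q^8)\psi(q^{16}) + 4q^4\psi(q^{16})^2$, while $\phi(q^4)^4$ is already a function of $q^4$ — and after one more split $\phi(q^4) = \phi(q^{16}) + 2q^4\psi(q^{32})$ it becomes a function of $q^4$ whose ``even'' and ``odd-in-$q^4$'' pieces are controlled. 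Multiplying these three expansions together and collecting by residue of the exponent modulo $8$ gives $\phi(q)\phi(q^2)^2\phi(q^4)^4 = \sum_{j=0}^7 a_{1,j} q^j F_{1,j}(q^8)$; one then reads off that each $a_{1,j}$ carries exactly the power of $2$ coming from the number of ``$2q^{(\cdot)}\psi$'' factors needed to reach residue $j$. Concretely $a_{1,1}$ picks up one factor of $2$ (from the $2q\psi(q^8)$ term of $\phi(q)$), $a_{1,2}$ picks up $4$ (from $4q^2\phi(q^8)\psi(q^{16})$ in $\phi(q^2)^2$), $a_{1,3}$ picks up $2\cdot 4 = 8$, $a_{1,4}$ picks up at least $2$ (either $4q^4\psi(q^{16})^2$ or the $2q^4\psi(q^{32})$ from $\phi(q^4)$; the point is just $\geq 2$), $a_{1,5}=2\cdot\{\text{the }4\}$, $a_{1,6}=4\cdot 2$, and $a_{1,7}=2\cdot 4\cdot (\text{the }\geq 2 \text{ from } q^4)$, giving the claimed $2,4,8,2,8,8,32$.

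For the inductive step, suppose the statement holds for some odd $t$, and write $G(q) := \phi(q)\phi(q^2)^2\phi(q^4)^4$, so $G^{t+2} = G^t \cdot G^2$. By the inductive hypothesis $G^t = \sum_{i=0}^7 a_{t,i}q^i F_{t,i}(q^8)$ with the prescribed $2$-adic valuations $v(a_{t,i}) \geq (0,1,2,3,1,3,3,5)$ for $i=0,\dots,7$. I also need the corresponding expansion for $G^2$: squaring $G = \sum_{j=0}^7 a_{1,j}q^j F_{1,j}(q^8)$ and recollecting modulo $8$ gives $G^2 = \sum_{k=0}^7 b_k q^k H_k(q^8)$, and one checks (again just by counting factors of $2$, using $b_k = \sum_{j_1+j_2\equiv k} a_{1,j_1}a_{1,j_2}(\cdots)$ together with $q^8 = q^8$ absorbing carries) that $v(b_k) \geq (0,2,2,4,1,4,4,6)$ — in particular $v(b_k) \geq v(a_{t,k})$ for every $k$, with a little room to spare in several slots. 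Then $a_{t+2,\ell} = \sum_{i+k \equiv \ell \pmod 8} a_{t,i} b_k C_{i,k}(\cdot)$ for integer series $C_{i,k}$, and I would verify residue-by-residue that $\min_{i+k\equiv \ell} \big(v(a_{t,i}) + v(b_k)\big) \geq v(a_{t,\ell})$ for each $\ell \in \{1,\dots,7\}$; this is a finite check over the $8\times 8$ addition table modulo $8$, and the slack in the $b_k$ valuations is exactly what makes it go through (e.g. for $\ell = 1$: $i+k \equiv 1$ forces at least one of $i,k$ odd, and any odd index contributes $v \geq 1$ on one side and the other side contributes $v \geq 1$ or $v\geq 2$, so the product has $v \geq 1$, but one must confirm the minimum over \emph{all} such pairs is $\geq 1$, not that some pair achieves it).

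The main obstacle I anticipate is bookkeeping rather than conceptual: getting the $2$-adic valuation vector for $G^2$ (the $b_k$) sharp enough that the residue-by-residue comparison $v(a_{t,i}) + v(b_k) \geq v(a_{t,\ell})$ never fails — the dangerous slots are $\ell = 1$ and $\ell = 4$, where the target valuation ($1$) is small, so I need to be sure that \emph{every} way of writing $\ell \equiv i+k$ forces at least that much. A secondary subtlety is that when $i + k \geq 8$ the ``carry'' shifts a $q^8$ into the $F$/$H$ factor, which does not change the power of $2$ but does change which series sits in $C_{i,k}$; since the theorem only claims integrality of $F_{t,\ell}$ and not any divisibility of it, this is harmless, but I would state it carefully. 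I would organize the write-up by first proving the $t=1$ and $G^2$ expansions as explicit lemmas (with the valuation vectors displayed in an array), then doing the one-line induction, then deducing Theorem~\ref{mainthm} by multiplying through by the $q^8$-series $\big(\prod_{i\geq 3}\phi(q^{2^i})^{2^i}\big)^t$, which cannot lower any $2$-adic valuation.
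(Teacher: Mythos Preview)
Your approach is essentially the paper's: induction on odd $t$ in steps of $2$, base case via repeated use of the $2$-dissection $\phi(q)=\phi(q^4)+2q\psi(q^8)$, and induction step via $G^{t+2}=G^t\cdot G^2$ followed by a residue-by-residue divisibility check modulo $32$. The paper carries out that last check by explicitly writing $G^t\cdot G^2$ modulo $32$ with the $b_{t,j}$ left symbolic; your valuation-table formulation is just a cleaner repackaging of the same computation.

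One concrete correction: your stated vector $v(b_k)\ge(0,2,2,4,1,4,4,6)$ for $G^2$ is wrong in two slots. From the $(1,4)$ cross-term one gets $v(b_5)\ge 1+v(a_{1,1})+v(a_{1,4})=1+1+1=3$, not $4$ (and indeed the explicit expansion of $G^2$ modulo $32$ has an $8q^5$ term); similarly the $(1,6)$ and $(3,4)$ cross-terms give only $v(b_7)\ge 5$, not $6$. Fortunately this does not damage the induction: with the corrected bounds $v(b_k)\ge(0,2,2,5,2,3,5,5)$ the check $\min_{i+k\equiv\ell}\bigl(v(a_{t,i})+v(b_k)\bigr)\ge\text{target}(\ell)$ still passes for every $\ell$ (the tight cases are $\ell=5,6,7$, each landing exactly on the target $3,3,5$). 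So fix those two numbers and your write-up goes through; your anticipation that $\ell=1,4$ would be the dangerous slots is in fact backwards---those have the most slack, while $\ell=7$ is where the margin is zero.
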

 
\begin{proof} 
 We prove Theorem \ref{mainthm2} by induction on $t$.  The basis case of our proof by induction is the case $t=1,$ and this means that we wish to focus our attention on 
$
\left(\phi(q)\phi(q^2)^2\phi(q^4)^4\right)^1 =  \phi(q)\phi(q^2)^2\phi(q^4)^4.
$
Thanks to multiple applications of the 2--dissection result in Lemma \ref{2-dissection}, we know
\begin{eqnarray*}
\phi(q)\phi(q^2)^2\phi(q^4)^4 
&=& 
\left(\phi(q^4)+2q\psi(q^8)\right)\left(\phi(q^8)+2q^2\psi(q^{16}) \right)^2  \\
&& \ \ \ \ \ \ \ \ \times \left( \phi(q^{16}) + 2q^4\psi(q^{32})\right)^4 \\  
&=& 
\left(\left[\phi(q^{16}) + 2q^4\psi(q^{32}) \right]+2q\psi(q^8)\right)\left(\phi(q^8)+2q^2\psi(q^{16}) \right)^2  \\
&& \ \ \ \ \ \ \ \ \times  \left( \phi(q^{16}) + 2q^4\psi(q^{32})\right)^4.
\end{eqnarray*}
Using a computer algebra system, we see that the expanded version of this expression for $\phi(q)\phi(q^2)^2\phi(q^4)^4$ is the following:  

\begin{eqnarray*}
&& 128{q}^{24} \left( \psi \left( {q}^{16} \right)  \right) ^{2} \left( {
\psi} \left( {q}^{32} \right)  \right) ^{5} \\
&& {}+128{q}^{22}\phi \left( {q}^{8} \right) \psi \left( {q}^{16} \right)  \left( {
\psi} \left( {q}^{32} \right)  \right) ^{5} \\
&& {}+128{q}^{21}\psi \left( {q}^{8} \right)  \left( \psi \left( {q}^{16} \right) 
 \right) ^{2} \left( \psi \left( {q}^{32} \right)  \right) ^{4} \\
&& {}+ {q}^{20}\left( 32\, \left( \phi \left( {q}^{8} \right)  \right) ^
{2} \left( \psi \left( {q}^{32} \right)  \right) ^{5}+320\,{\phi} \left( {q}^{16} \right)  \left( \psi \left( {q}^{16}
 \right)  \right) ^{2} \left( \psi \left( {q}^{32} \right) 
 \right) ^{4} \right)  \\
&&{}+128{q}^{19}\phi \left( {q}^{8} \right) {
\psi} \left( {q}^{8} \right) \psi \left( {q}^{16} \right) 
 \left( \psi \left( {q}^{32} \right)  \right) ^{4} \\
&& {}+320{q}^{18}{\phi} \left( {q}^{8} \right) \phi \left( {q}^{16} \right) {
\psi} \left( {q}^{16} \right)  \left( \psi \left( {q}^{32}
 \right)  \right) ^{4} \\
&& {}+ {q}^{17}\left( 32\, \left( \phi \left( {q
}^{8} \right)  \right) ^{2}\psi \left( {q}^{8} \right)  \left( {
\psi} \left( {q}^{32} \right)  \right) ^{4} \right. \\
&&\ \ \ \ \ \ \ \ \ \ \ \left. +256\,\phi \left( {
q}^{16} \right) \psi \left( {q}^{8} \right)  \left( \psi
 \left( {q}^{16} \right)  \right) ^{2} \left( \psi \left( {q}^{32
} \right)  \right) ^{3} \right)  \\
&& {}+ {q}^{16}  \left( 80\, \left( \phi
 \left( {q}^{8} \right)  \right) ^{2}\phi \left( {q}^{16}
 \right)  \left( \psi \left( {q}^{32} \right)  \right) ^{4} \right. \\
&&\ \ \ \ \ \ \ \ \ \ \ \left. +320\,
 \left( \phi \left( {q}^{16} \right)  \right) ^{2} \left( {\psi} \left( {q}^{16} \right)  \right) ^{2} \left( \psi \left( {q}
^{32} \right)  \right) ^{3} \right) \\
&& {}+256{q}^{15} \phi \left( {q}
^{8} \right) \phi \left( {q}^{16} \right) \psi \left( {q}^{8
} \right) \psi \left( {q}^{16} \right)  \left( \psi \left( {
q}^{32} \right)  \right) ^{3}\\
&& {}+320{q}^{14}\phi \left( {q}^{8}
 \right)  \left( \phi \left( {q}^{16} \right)  \right) ^{2}{\psi} \left( {q}^{16} \right)  \left( \psi \left( {q}^{32}
 \right)  \right) ^{3} \\
&& {}+ {q}^{13} \left( 64\, \left( \phi \left( {q
}^{8} \right)  \right) ^{2}\phi \left( {q}^{16} \right) \psi
 \left( {q}^{8} \right)  \left( \psi \left( {q}^{32} \right) 
 \right) ^{3} \right. \\
&&\ \ \ \ \ \ \ \ \ \ \ \left. +192\, \left( \phi \left( {q}^{16} \right)  \right) 
^{2}\psi \left( {q}^{8} \right)  \left( \psi \left( {q}^{16}
 \right)  \right) ^{2} \left( \psi \left( {q}^{32} \right) 
 \right) ^{2} \right) \\
&&  {}+ {q}^{12} \left( 80\, \left( \phi \left( {q
}^{8} \right)  \right) ^{2} \left( \phi \left( {q}^{16} \right) 
 \right) ^{2} \left( \psi \left( {q}^{32} \right)  \right) ^{3} \right. \\
&&\ \ \ \ \ \ \ \ \ \ \ \left. +
160\, \left( \phi \left( {q}^{16} \right)  \right) ^{3} \left( {
\psi} \left( {q}^{16} \right)  \right) ^{2} \left( \psi
 \left( {q}^{32} \right)  \right) ^{2} \right) \\
&& {}+192{q}^{11} \phi
 \left( {q}^{8} \right)  \left( \phi \left( {q}^{16} \right) 
 \right) ^{2}\psi \left( {q}^{8} \right) \psi \left( {q}^{16
} \right)  \left( \psi \left( {q}^{32} \right)  \right) ^{2}\\
&& {}+160{q}^{10} \phi \left( {q}^{8} \right)  \left( \phi \left( {q}
^{16} \right)  \right) ^{3}\psi \left( {q}^{16} \right)  \left( {
\psi} \left( {q}^{32} \right)  \right) ^{2}\\
&& {}+ {q}^{9}\left( 48\,
 \left( \phi \left( {q}^{8} \right)  \right) ^{2} \left( {\phi} \left( {q}^{16} \right)  \right) ^{2}\psi \left( {q}^{8}
 \right)  \left( \psi \left( {q}^{32} \right)  \right) ^{2} \right. \\
&&\ \ \ \ \ \ \ \ \ \ \ \left. +64\,
 \left( \phi \left( {q}^{16} \right)  \right) ^{3}\psi
 \left( {q}^{8} \right)  \left( \psi \left( {q}^{16} \right) 
 \right) ^{2}\psi \left( {q}^{32} \right)  \right)  \\
&& {}+ {q}^{8}  \left( 40\, \left( \phi \left( {q}^{8} \right)  \right) ^{2}
 \left( \phi \left( {q}^{16} \right)  \right) ^{3} \left( {\psi} \left( {q}^{32} \right)  \right) ^{2} \right. \\
&&\ \ \ \ \ \ \ \ \ \ \ \left. +40\, \left( \phi
 \left( {q}^{16} \right)  \right) ^{4} \left( \psi \left( {q}^{16
} \right)  \right) ^{2}\psi \left( {q}^{32} \right)  \right) \\
&& {}+64{q}^{7}\phi \left( {q}^{8} \right)  \left( \phi \left( {q}^
{16} \right)  \right) ^{3}\psi \left( {q}^{8} \right) \psi
 \left( {q}^{16} \right) \psi \left( {q}^{32} \right)  \\
&& {}+40{q}^{6}\phi \left( {q}^{8} \right)  \left( \phi \left( {q}^{16}
 \right)  \right) ^{4}\psi \left( {q}^{16} \right) \psi
 \left( {q}^{32} \right)  \\
&& 
{}+ {q}^{5} \left( 16\, \left( \phi \left( 
{q}^{8} \right)  \right) ^{2} \left( \phi \left( {q}^{16}
 \right)  \right) ^{3}\psi \left( {q}^{8} \right) \psi
 \left( {q}^{32} \right) +8\, \left( \phi \left( {q}^{16}
 \right)  \right) ^{4}\psi \left( {q}^{8} \right)  \left( {\psi} \left( {q}^{16} \right)  \right) ^{2} \right)  \\
&& {}+ {q}^{4}\left( 10
\, \left( \phi \left( {q}^{8} \right)  \right) ^{2} \left( {\phi} \left( {q}^{16} \right)  \right) ^{4}\psi \left( {q}^{32}
 \right) +4\, \left( \phi \left( {q}^{16} \right)  \right) ^{5}
 \left( \psi \left( {q}^{16} \right)  \right) ^{2} \right)  \\
&& {}+8{q}^{3} \phi \left( {q}^{8} \right)  \left( \phi \left( {q}^{16
} \right)  \right) ^{4}\psi \left( {q}^{8} \right) \psi
 \left( {q}^{16} \right) \\
&& {}+4{q}^{2} \phi \left( {q}^{8} \right) 
 \left( \phi \left( {q}^{16} \right)  \right) ^{5}\psi
 \left( {q}^{16} \right) \\
&& {}+2q \left( \phi \left( {q}^{8}
 \right)  \right) ^{2} \left( \phi \left( {q}^{16} \right) 
 \right) ^{4}\psi \left( {q}^{8} \right)  \\
&& {}+ \left( \phi
 \left( {q}^{8} \right)  \right) ^{2} \left( \phi \left( {q}^{16}
 \right)  \right) ^{5}
\end{eqnarray*}
Note that, because all of the moduli in Theorem \ref{mainthm2} (i.e., $2, 4, 8,$ and $32$) are divisors of 32, it is sufficient to look at the expression for $\phi(q)\phi(q^2)^2\phi(q^4)^4$ (which appears above) modulo 32 and confirm that the divisibilities for the coefficients are satisfied.  Indeed, when we look at the above modulo 32, we obtain

\begin{eqnarray*}
&& 16{q}^{16} \,\phi \left( {q}^{16} \right)  \left( \phi \left( {q}^{8}
 \right)  \right) ^{2} \left( \psi \left( {q}^{32}
 \right)  \right) ^{4} \\
&& {}+16{q}^{12}\, \left( \phi \left( {q}^{16} \right) 
 \right) ^{2} \left( \phi \left( {q}^{8} \right)  \right) ^{2} \left( \psi \left( {q}^{32} \right)  \right) ^{3}  \\
&& {}+16{q}^{9} \,\psi \left( {q}^{8} \right)  \left( \phi \left( {q}^{8}
 \right)  \right) ^{2} \left( \phi \left( {q}^{16} \right) 
 \right) ^{2} \left( \psi \left( {q}^{32} \right)  \right) ^{2}  \\
&& {}+8{q}^{8}
 \left( \, \left( \phi \left( {q}^{16} \right)  \right) ^{3}
 \left( \phi \left( {q}^{8} \right)  \right) ^{2} \left( \psi \left( {q}^{32} \right)  \right) ^{2} +\, \left( \phi \left( {q
}^{16} \right)  \right) ^{4} \left( \psi \left( {q}^{16} \right) 
 \right) ^{2}\psi \left( {q}^{32} \right)  \right) \\ 
&&{}+8 {q}^{6} \,
 \left( \phi \left( {q}^{16} \right)  \right) ^{4}\phi
 \left( {q}^{8} \right) \psi \left( {q}^{16} \right) \psi
 \left( {q}^{32} \right)\\
&& {}+  8{q}^{5} \left( \,\psi \left( {q}^{8}
 \right)  \left( \psi \left( {q}^{16} \right)  \right) ^{2}
 \left( \phi \left( {q}^{16} \right)  \right) ^{4} +2\,\psi
 \left( {q}^{8} \right)  \left( \phi \left( {q}^{8} \right) 
 \right) ^{2} \left( \phi \left( {q}^{16} \right)  \right) ^{3}\psi \left( {q}^{32} \right)  \right) \\
&& {}+ 2{q}^{4}\left( 2\, \left( \phi \left( {q}^{16} \right)  \right) ^{5} \left( \psi
 \left( {q}^{16} \right)  \right) ^{2}  +5\, \left( \phi \left( {q
}^{16} \right)  \right) ^{4} \left( \phi \left( {q}^{8} \right) 
 \right) ^{2}\psi \left( {q}^{32} \right)  \right)  \\
&& {}+8{q}^{3}\,\psi \left( {q}^{8} \right) \phi \left( {q}^{8} \right) \psi \left( {q}^{16} \right)  \left( \phi \left( {q}^{16}
 \right)  \right) ^{4} \\
&& {}+4{q}^{2} \, \left( \phi \left( {q}^{16}
 \right)  \right) ^{5}\phi \left( {q}^{8} \right) \psi
 \left( {q}^{16} \right) \\
&& {}+2q\,\psi \left( {q}^{8} \right) 
 \left( \phi \left( {q}^{8} \right)  \right) ^{2} \left( \phi \left( {q}^{16} \right)  \right) ^{4} \\
&& {}+ \left( \phi \left( {q}^
{16} \right)  \right) ^{5} \left( \phi \left( {q}^{8} \right) 
 \right) ^{2}.
\end{eqnarray*}
Each of the numerical coefficients above satisfies the corresponding congruences from Theorem \ref{mainthm2}.  We summarize this divisibility information in the table below.  
\begin{small}
\begin{table}[h!]
\begin{tabular}{ | c c c |  }
\hline
Term &  Power of $q$ Modulo 8 & Required Divisor  \\
\hline
$16q^{16}$ & 0 & None \\
$16q^{12}$ & 4 & 2 \\
$16q^9$ & 1 & 2  \\
$8q^8$ & 0 & None \\
$8q^6$ & 6 & 8 \\
$8q^5$ & 5 & 8 \\
$2q^4$ & 4 & 2 \\
$8q^3$ & 3 & 8 \\
$4q^2$ & 2 & 4 \\
$2q$ & 1 & 2 \\
\hline
\end{tabular}
\begin{center}
\caption{Divisibility Check for Basis Case \phantom{111111111111111}}
\end{center}
\label{table:1}
\end{table}
\end{small}

We next move to the induction step of our proof.  We assume that, for some odd $t\geq 1,$ 
$$
\left(\phi(q)\phi(q^2)^2\phi(q^4)^4  \right)^t = \left( \sum_{j=0}^7 a_{t,j}q^jF_{t,j}(q^8) \right) 
$$
where the appropriate divisibility properties are satisfied by the coefficients $a_{t,j}$  and the functions $F_{t,j}(q^8)$ have power series representation with integer coefficients (for $0\leq j \leq 7$).   We then wish to prove that 
$$
\left(\phi(q)\phi(q^2)^2\phi(q^4)^4  \right)^{t+2}  = \left( \sum_{j=0}^7 a_{t+2,j}q^jF_{t+2,j}(q^8) \right) 
$$
where the coefficients $a_{t+2,j}$ satisfy the appropriate divisibility properties and the functions $F_{t+2,j}(q^8)$ have power series representation with integer coefficients (for $0\leq j\leq 7$).  Note that 
\begin{eqnarray*}
\left(\phi(q)\phi(q^2)^2\phi(q^4)^4\right)^{t+2}  
&=&
\left(\phi(q)\phi(q^2)^2\phi(q^4)^4\right)^{t} \left(\phi(q)\phi(q^2)^2\phi(q^4)^4\right)^{2}   \\
&=& 
\left( \sum_{j=0}^7 a_{t,j}q^jF_{t,j}(q^8) \right)\left(\phi(q)\phi(q^2)^2\phi(q^4)^4\right)^{2}  
\end{eqnarray*}
where $a_{t,j}$ satisfies the appropriate divisibility properties as stated in Theorem \ref{mainthm2} thanks to the induction hypothesis.  

As was mentioned above, we can now perform our work modulo 32 and confirm the appropriate divisibility properties of the corresponding coefficients.  
With this in mind, we write 
\begin{eqnarray*}
\sum_{j=0}^7 a_{t,j}q^jF_{t,j}(q^8) 
&=& b_{t,0}F_{t,0}(q^8) + 2b_{t,1}qF_{t,1}(q^8) \\
\noalign{\vspace{-10pt}}
&& \ \ \ \ + 4b_{t,2}q^2F_{t,2}(q^8) + 8b_{t,3}q^3F_{t,3}(q^8) \\
&& \ \ \ \ + 2b_{t,4}q^4F_{t,4}(q^8) + 8b_{t,5}q^5F_{t,5}(q^8) \\
&& \ \ \ \ + 8b_{t,6}q^6F_{t,6}(q^8) + 32b_{t,7}q^7F_{t,7}(q^8)
\end{eqnarray*}
for some integers $b_{t,j},$ $j=1, 2, \dots, 7.$ Then we have (modulo 32)
\begin{eqnarray*}
&& \left( \sum_{j=0}^7 a_{t,j}q^jF_{t,j}(q^8) \right)\left(\phi(q)\phi(q^2)^2\phi(q^4)^4\right)^{2} \\
&\equiv & 
8{q}^{12} \, \left( \phi\left( {q}^{16} \right)  \right) ^{8} \left( \phi\left( {q}^{8} \right)  \right) ^{4} \left( \psi\left( 
{q}^{32} \right)  \right) ^{2}b_{t,4}F_{t,4} \\
&& \ \ \ \ 
{}+16{q}^{10}\, \left( \phi\left( {q}^{16} \right)  \right) ^{8} \left( \phi\left( {q}
^{8} \right)  \right) ^{4} \left( \psi\left( {q}^{32} \right)  \right) ^{2}b_{t,2}F_{t,2} \\
&& \ \ \ \ 
{}+ 8{q}^{9} \left( \, \left( \phi\left( {q}^{16} \right)  \right) ^{8} \left( \phi\left( {q}^{8}
 \right)  \right) ^{4} \left( \psi\left( {q}^{32} \right) 
 \right) ^{2}b_{t,1}F_{t,1} \right. \\
&& \ \ \ \ \ \ \ \ \ \ \ \ \ \left. 
{}+2\, \left( \phi\left( {q}^{8}
 \right)  \right) ^{4} \left( \phi\left( {q}^{16} \right) 
 \right) ^{8}\psi\left( {q}^{8} \right) \psi\left( {q}^{32} \right) b_{t,4}F_{t,4} \right)  \\
&& \ \ \ \ 
{}+ 4{q}^{8} \left( 4\, \left( \phi
 \left( {q}^{16} \right)  \right) ^{10} \left( \psi\left( {q}^{16} \right)  \right) ^{2} \left( \phi\left( {q}^{8} \right) 
 \right) ^{2}b_{t,4}F_{t,4} \right. \\
&& \ \ \ \ \ \ \ \ \ \ \ \ \ \left. 
{}+4\, \left( \phi\left( {q}^{16}
 \right)  \right) ^{10} \left( \psi\left( {q}^{16} \right) 
 \right) ^{4}b_{t,0}F_{t,0} \right. \\
&& \ \ \ \ \ \ \ \ \ \ \ \ \ \left. 
{}+5\, \left( \phi\left( {q}^{16}
 \right)  \right) ^{8} \left( \phi\left( {q}^{8} \right) 
 \right) ^{4} \left( \psi\left( {q}^{32} \right)  \right) ^{2}b_{t,0}F_{t,0} \right. \\
&& \ \ \ \ \ \ \ \ \ \ \ \ \ \left. 
{}+2\, \left( \phi\left( {q}^{16} \right)  \right) ^{9
} \left( \phi\left( {q}^{8} \right)  \right) ^{4}\psi
 \left( {q}^{32} \right) b_{t,4}F_{t,4} \right)  \\
&& \ \ \ \ 
{}+  8{q}^{6} \left( 2\,
 \left( \phi\left( {q}^{16} \right)  \right) ^{9} \left(\phi \left( {q}^{8} \right)  \right) ^{4}\psi\left( {q}^{32}
 \right) b_{t,2}F_{t,2} \right. \\
&& \ \ \ \ \ \ \ \ \ \ \ \ \ \left. 
{}+\, \left( \phi\left( {q}^{8} \right) 
 \right) ^{4} \left( \phi\left( {q}^{16} \right)  \right) ^{10}b
_{{t,6}}F_{t,6} \right.  \\
&& \ \ \ \ \ \ \ \ \ \ \ \ \ \left. 
{}+\, \left( \psi\left( {q}^{8} \right)  \right) ^{2} \left( \phi\left( {q}^{8} \right)  \right) ^{4} 
\left( \phi\left( {q}^{16} \right)  \right) ^{8}b_{t,4}F_{t,4} \right. \\
&& \ \ \ \ \ \ \ \ \ \ \ \ \ \left. 
{}+2\, \left( \phi\left( {q}^{16} \right)  \right) ^{10} \left( \phi
 \left( {q}^{8} \right)  \right) ^{3}\psi\left( {q}^{16}
 \right) b_{t,4}F_{t,4} \right. \\
&& \ \ \ \ \ \ \ \ \ \ \ \ \ \left. 
{}+2\, \left( \phi\left( {q}^{8} \right) 
 \right) ^{4} \left( \phi\left( {q}^{16} \right)  \right) ^{8}\psi\left( {q}^{8} \right) \psi\left( {q}^{32} \right) b_{t,1}F_{t,1} \right) \\
&& \ \ \ \ 
{}+ 8{q}^{5} \left( 2\, \left( \phi\left( {q}^{16} \right)  \right) ^{10} \left( \psi\left( {q}^{16} \right) 
 \right) ^{2} \left( \phi\left( {q}^{8} \right)  \right) ^{2}b_{t,1}F_{t,1} \right. \\
&& \ \ \ \ \ \ \ \ \ \ \ \ \ \left. 
{}+\, \left( \phi\left( {q}^{8} \right)  \right) ^{4}
 \left( \phi\left( {q}^{16} \right)  \right) ^{8}\psi\left( {q}^{8} \right) \psi\left( {q}^{32} \right) b_{t,0}F_{t,0} \right. \\
&& \ \ \ \ \ \ \ \ \ \ \ \ \ \left. 
{}+\, \left( \phi\left( {q}^{8} \right)  \right) ^{4} \left( \phi\left( {q}^{16} \right)  \right) ^{9}\psi\left( {q}^{8}
 \right) b_{t,4}F_{t,4} \right. \\
&& \ \ \ \ \ \ \ \ \ \ \ \ \ \left. 
{}+\, \left( \phi\left( {q}^{16} \right) 
 \right) ^{9} \left( \phi\left( {q}^{8} \right)  \right) ^{4}\psi\left( {q}^{32} \right) b_{t,1}F_{t,1} \right. \\
&& \ \ \ \ \ \ \ \ \ \ \ \ \ \left. 
{}+\, \left( \phi
 \left( {q}^{8} \right)  \right) ^{4} \left( \phi\left( {q}^{16}
 \right)  \right) ^{10}b_{t,5}F_{t,5} \right) \\
&& \ \ \ \ 
{}+ 2{q}^{4}\left( 8\,
 \left( \psi\left( {q}^{8} \right)  \right) ^{2} \left( \phi\left( {q}^{8} \right)  \right) ^{4} \left( \phi\left( {q}^{16} \right)  \right) ^{8}b_{t,2}F_{t,2} \right. \\
&& \ \ \ \ \ \ \ \ \ \ \ \ \ \left. 
{}+\, \left( \phi\left( {q}^{8} \right)  \right) ^{4} \left( \phi\left( {q}^{16} \right) 
 \right) ^{10}b_{t,4}F_{t,4} \right. \\
&& \ \ \ \ \ \ \ \ \ \ \ \ \ \left. 
{}+12\, \left( \phi\left( {q}^{16}
 \right)  \right) ^{10} \left( \psi\left( {q}^{16} \right) 
 \right) ^{2} \left( \phi\left( {q}^{8} \right)  \right) ^{2}b_{t,0}F_{t,0} \right. \\
&& \ \ \ \ \ \ \ \ \ \ \ \ \ \left. 
{}+10\, \left( \phi\left( {q}^{16} \right)  \right) ^{9} \left( \phi\left( {q}^{8} \right)  \right) ^{4}\psi
 \left( {q}^{32} \right) b_{t,0}F_{t,0} \right)  \\
&& \ \ \ \ 
{}+ 8{q}^{3} \left( \, \left( \phi\left( {q}^{8} \right)  \right) ^{4} \left( \phi\left( {q}^{16} \right)  \right) ^{10}b_{t,3}F_{t,3} \right. \\
&& \ \ \ \ \ \ \ \ \ \ \ \ \ \left. 
{}+\, \left( \psi\left( {q}^{8} \right)  \right) ^{2} \left( \phi\left( 
{q}^{8} \right)  \right) ^{4} \left( \phi\left( {q}^{16}
 \right)  \right) ^{8}b_{t,1}F_{t,1} \right. \\
&& \ \ \ \ \ \ \ \ \ \ \ \ \ \left. 
{}+2\, \left( \phi\left( {q}^{8} \right)  \right) ^{4} \left( \phi\left( {q}^{16} \right) 
 \right) ^{9}\psi\left( {q}^{8} \right) b_{t,2}F_{t,2} \right. \\
&& \ \ \ \ \ \ \ \ \ \ \ \ \ \left. 
{}+2\,
 \left( \phi\left( {q}^{16} \right)  \right) ^{10} \left( \phi\left( {q}^{8} \right)  \right) ^{3}\psi\left( {q}^{16}
 \right) b_{t,1}F_{t,1} \right) \\
&& \ \ \ \ 
{}+ 4{q}^{2} \left( \, \left(\phi
 \left( {q}^{8} \right)  \right) ^{4} \left( \phi\left( {q}^{16}
 \right)  \right) ^{10}b_{t,2}F_{t,2} \right. \\
&& \ \ \ \ \ \ \ \ \ \ \ \ \ \left. 
{}+2\, \left( \phi\left( {q}^{8} \right)  \right) ^{4} \left( \phi\left( {q}^{16} \right) 
 \right) ^{9}\psi\left( {q}^{8} \right) b_{t,1}F_{t,1} \right. \\
&& \ \ \ \ \ \ \ \ \ \ \ \ \ \left. 
{}+\,
 \left( \psi\left( {q}^{8} \right)  \right) ^{2} \left( \phi\left( {q}^{8} \right)  \right) ^{4} \left( \phi\left( {q}^{16} \right)  \right) ^{8}b_{t,0}F_{t,0} \right. \\
&& \ \ \ \ \ \ \ \ \ \ \ \ \ \left. 
{}+2\, 
\left( \phi\left( {q}^{16} \right)  \right) ^{10} \left( \phi\left( {q}^{8} \right) 
 \right) ^{3}\psi\left( {q}^{16} \right) b_{t,0}F_{t,0} \right)  \\
&& \ \ \ \ 
{}+ 2q\left( 2\, \left( \phi\left( {q}^{8} \right)  \right) ^
{4} \left( \phi\left( {q}^{16} \right)  \right) ^{9}\psi
 \left( {q}^{8} \right) b_{t,0}F_{t,0} \right. \\
&& \ \ \ \ \ \ \ \ \ \ \ \ \ \left. 
{}+\, \left( \phi\left( {q}
^{8} \right)  \right) ^{4} \left( \phi\left( {q}^{16} \right) 
 \right) ^{10}b_{t,1}F_{t,1} \right)  \\
&& \ \ \ \ 
{} + \left( \phi\left( {q}^{8
} \right)  \right) ^{4} \left( \phi\left( {q}^{16} \right) 
 \right) ^{10}b_{t,0}F_{t,0} \pmod{32}.
\end{eqnarray*}

Note that the coefficients of the corresponding powers of $q$ are divisible by the appropriate powers of 2.  We confirm this with the summarized information in the table below.  
\begin{small}
\begin{table}[h!]
\centering
\begin{tabular}{ | c c c |  }
\hline
Term &  Power of $q$ Modulo 8 & Required Divisor  \\
\hline
$8q^{12}$ & 4 & 2 \\
$16q^{10}$ & 2 & 4\\
$8q^9$ & 1 & 2  \\
$4q^8$ & 0 & None \\
$8q^6$ & 6 & 8 \\
$8q^5$ & 5 & 8 \\
$2q^4$ & 4 & 2 \\
$8q^3$ & 3 & 8 \\
$4q^2$ & 2 & 4 \\
$2q$ & 1 & 2 \\
\hline
\end{tabular}
\begin{center}
\caption{Divisibility Check for Induction Step \phantom{111111111111111}}
\end{center}
\label{table:2}
\end{table}
\end{small}

\noindent 
This gives the induction step, and the proof is complete.   
\end{proof}
\noindent 
We are now in a position to prove Theorem \ref{mainthm}.  

\begin{proof}
(of Theorem \ref{mainthm})  Using Theorems \ref{altgenfn} and  \ref{mainthm2}, we know that, for all odd $t\geq 1,$  
\begin{eqnarray*}
\sum_{n\geq 0} \overline{p}_{-t}(n)q^n 
&=& 
\left(  \prod_{i\geq 0} \phi(q^{2^i})^{2^i} \right)^t \\
&=& 
\left( \phi(q)\phi(q^2)^2\phi(q^4)^4\right)^t\left(  \prod_{i\geq 3} \phi(q^{2^i})^{2^i} \right)^t \\
&=& 
\left( \sum_{j=0}^7 a_{t,j}q^jF_{t,j}(q^8) \right)  \left(  \prod_{i\geq 3} \phi(q^{2^i})^{2^i} \right)^t 
\end{eqnarray*}
\noindent where $F_{t,j}(q^8)$ is a function of $q^8$ whose power series representation has integer coefficients, and the following divisibilities hold:  
\begin{eqnarray*}
a_{t, 1} &\equiv & 0 \pmod{2}, \\
a_{t, 2} &\equiv & 0 \pmod{4}, \\
a_{t, 3} &\equiv & 0 \pmod{8}, \\
a_{t, 4} &\equiv & 0 \pmod{2}, \\
a_{t, 5} &\equiv & 0 \pmod{8}, \\
a_{t, 6} &\equiv & 0 \pmod{8}, \\
a_{t, 7} &\equiv & 0 \pmod{32}.
\end{eqnarray*}
The result immediately follows.  
\end{proof}


\section{Closing Thoughts}  
Several thoughts are in order as we close.  

\begin{itemize}

\item{} First, notice that we could look further at, say, 16--dissections or 32--dissections of the generating function for $\overline{p}_{-t}(n)$ (rather than stopping at 8--dissections as we did above) thanks to the beneficial structure of the generating function representation in Theorem \ref{altgenfn}.  However, it is clear that the mathematics will get more complicated.  We have chosen to stop at the 8--dissections as guided by Saikia's original conjecture.  

\vskip .25in

\item{} Secondly, we highlight that $t$ need not be prime (as was required in Saikia's original conjecture).  The results hold for any odd, positive integer $t.$    Related to this comment, note that no significant facts about the functions $\phi(q)$ or $\psi(q)$ come into play in the above results regarding divisibility by powers of 2, other than the appearance of the ``2'' in the 2--dissection in Lemma \ref{2-dissection}.  Indeed, it is the presence of the 2 in Lemma \ref{2-dissection} which serves as the ``engine'' for the divisibility properties in Theorem \ref{mainthm}.   Said a different way, these divisibility properties for $\overline{p}_{-t}(n)$ for odd $n$ have relatively little to do with the ``combinatorics'' of the colored overpartitions in question, and a great deal to do with the algebra involved in the manipulations of the generating function as described above.  

\vskip .25in

\item{} Note that there are no claims here to Theorem \ref{mainthm} being ``best possible''.  In particular, for specific values of $t$, the congruences in Theorem \ref{mainthm} can be improved.  For example, Saikia \cite[Theorem 1]{MS} proved that, for all $n\geq 0,$ $\overline{p}_{-5}(8n+7) \equiv 0 \pmod{128},$ an improvement of Theorem \ref{mainthm} where divisibility by only 32 is guaranteed for the arithmetic progression $8n+7$.  

\vskip .25in 

\item{} It is striking to note that there is no statement in Theorem \ref{mainthm} about what happens modulo a power of 2 for the values $\overline{p}_{-t}(8n+0)$ for $t$ odd.  From the perspective of the proof approach shared above, one might conclude that there is nothing that can be said about the divisibility modulo powers of 2 within the arithmetic progression $8n+0.$  However, a great deal can be said, if one takes a different viewpoint. As a start, consider the following ``global'' parity theorem which is easily proved:  

\begin{theorem}
\label{characterization_mod2}
For all $t\geq 1$ and all $n\geq 1,$ $\overline{p}_{-t}(n) \equiv 0 \pmod{2}.$  
\end{theorem}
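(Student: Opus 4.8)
The plan is to work directly from the product representation in Theorem \ref{altgenfn}, namely
$$
\sum_{n\geq 0} \overline{p}_{-t}(n)q^n = \left( \prod_{i\geq 0} \phi(q^{2^i})^{2^i} \right)^t ,
$$
and reduce everything modulo $2$. The key observation is that $\phi(q)^2$ is especially simple modulo $2$: since $\phi(q) = \sum_{k=-\infty}^\infty q^{k^2}$, squaring and using the Frobenius-type congruence $(A+B)^2 \equiv A^2 + B^2 \pmod 2$ for power series with integer coefficients, we get $\phi(q)^2 \equiv \sum_{k} q^{2k^2} = \phi(q^2) \pmod 2$. More generally $\phi(q^{2^i})^{2} \equiv \phi(q^{2^{i+1}}) \pmod 2$, so iterating, $\phi(q^{2^i})^{2^i} \equiv \phi(q^{2^{2i}}) \pmod 2$ for $i \geq 1$, while the $i=0$ factor is just $\phi(q)$ itself.

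First I would therefore show that, modulo $2$, the infinite product telescopes in a controlled way. Using $\phi(q^{2^i})^{2^i} \equiv \phi(q^{2^{2i}}) \pmod 2$, the full product becomes, modulo $2$,
$$
\prod_{i\geq 0} \phi(q^{2^i})^{2^i} \equiv \phi(q)\,\phi(q^4)\,\phi(q^{16})\,\phi(q^{64})\cdots \equiv \prod_{j\geq 0}\phi\big(q^{4^j}\big) \pmod 2 .
$$
Then, because $t$ appears only as an overall exponent and we are working mod $2$, I would split into the two cases of parity of $t$: if $t$ is odd the generating function is $\equiv \prod_{j\geq 0}\phi(q^{4^j}) \pmod 2$, and if $t$ is even it is $\equiv \big(\prod_{j\geq 0}\phi(q^{4^j})\big)^2 \equiv \prod_{j\geq 0}\phi(q^{4^{j+1}}) \equiv \prod_{j\geq 1}\phi(q^{4^j}) \pmod 2$ — in either case a product of terms $\phi(q^{4^j})$. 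The final step is to read off the coefficients: each factor $\phi(q^{4^j}) = 1 + 2q^{4^j} + 2q^{4\cdot 4^j} + \cdots \equiv 1 \pmod 2$ because every nonconstant coefficient of $\phi$ equals $2$ (the number of ways to write a positive integer as $\pm(\text{that square})$, or $0$). Hence the entire product is $\equiv 1 \pmod 2$, which is exactly the statement $\overline{p}_{-t}(0) = 1$ and $\overline{p}_{-t}(n) \equiv 0 \pmod 2$ for all $n \geq 1$.

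The main obstacle — really the only point requiring care — is justifying the manipulation of the infinite product modulo $2$: one must check that truncating the product at any finite stage only affects coefficients of $q^n$ for $n$ beyond a bound that grows with the truncation point, so that each fixed coefficient stabilizes and the mod-$2$ reductions above are legitimate term by term. This is routine since $\phi(q^{2^i})^{2^i} = 1 + O(q^{2^i})$, so the tail $\prod_{i \geq N}$ contributes $1 + O(q^{2^N})$; I would state this convergence remark explicitly and then the coefficient extraction is immediate. An alternative, even shorter route would be to start from $f_2^t/f_1^{2t}$, note $f_1^{2} \equiv f_2 \pmod 2$, hence $f_2^t/f_1^{2t} \equiv f_2^t/f_2^{t} \cdot (f_2/f_1^2)^{0}$... but this needs the cleaner identity $f_2 \equiv f_1^2 \pmod 2$ giving $f_2^t f_1^{-2t} \equiv 1 \pmod 2$ directly; I would likely present whichever of these two is cleanest, but the $\phi$-product version dovetails best with the machinery already set up in the paper.
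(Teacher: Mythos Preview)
Your proof is correct and rests on the same idea as the paper's: since $\phi(q)=1+2\sum_{k\geq 1}q^{k^2}\equiv 1\pmod 2$, every factor in the product of Theorem~\ref{altgenfn} is $\equiv 1\pmod 2$, so the entire generating function is $\equiv 1\pmod 2$. Your Frobenius-squaring step and the case split on the parity of $t$ are unnecessary detours, since the observation $\phi\equiv 1\pmod 2$ you make at the end already applies directly to each factor $\phi(q^{2^i})^{2^i}$ --- and that one-line reduction is exactly what the paper does.
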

\begin{proof}
From Theorem \ref{altgenfn} and the definition of $\phi(q),$ we know 
\begin{eqnarray*}
\sum_{n\geq 0} \overline{p}_{-t}(n)q^n 
&=& 
\left(  \prod_{i\geq 0} \phi(q^{2^i})^{2^i} \right)^t \\
&=& 
\left(  \prod_{i\geq 0} \left( 1+2\sum_{k=1}^\infty (q^{2^i})^{k^2} \right)^{2^i} \right)^t \\
&\equiv & 
\left(  \prod_{i\geq 0} (1+0)^{2^i} \right)^t  \pmod{2}\\
&=& 
1.
\end{eqnarray*}
The result follows.
\end{proof}
Thus, the only barrier to having a result included in Theorem \ref{mainthm} for the arithmetic progression $8n+0$ is the coefficient of $q^0$ since we define $\overline{p}_{-t}(0)=1.$    

As an aside, notice that we can take this ``global'' approach a step further and quickly prove a characterization modulo 4 satisfied by $\overline{p}_{-t}(n)$ for any odd $t\geq 1$.  
\begin{theorem}
\label{characterization_mod4}
For all odd $t\geq 1$ and all $n\geq 0,$   
$$\overline{p}_{-t}(n) \equiv  
\begin{cases}
  1  \pmod{4} & \text{if\ } n =0,  \\
  2 \pmod{4} & \text{if\ } n \text{ is a positive square}, \\
  0 \pmod{4} & \text{otherwise}.
\end{cases}
$$
\end{theorem}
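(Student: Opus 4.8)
The plan is to mimic the proof of Theorem \ref{characterization_mod2}, but to carry the computation one power of $2$ further, i.e., to work modulo $4$ rather than modulo $2$. Write $\phi(q) = 1 + 2S(q)$, where $S(q) = \sum_{k\geq 1} q^{k^2}$ is a power series all of whose coefficients are $0$ or $1$ (each positive square is hit exactly once, by $k^2$ with $k\geq 1$). Starting from the product representation in Theorem \ref{altgenfn}, I would split off the $i=0$ factor:
$$\sum_{n\geq 0}\overline{p}_{-t}(n)q^n = \phi(q)^t \prod_{i\geq 1}\phi(q^{2^i})^{2^i t}.$$

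The first main step is to show $\phi(q)^t \equiv \phi(q) \pmod 4$ for odd $t$. By the binomial theorem, $\phi(q)^t = (1+2S(q))^t = 1 + 2tS(q) + \sum_{j\geq 2}\binom{t}{j}2^j S(q)^j$, and every term with $j\geq 2$ is divisible by $4$. Since $t$ is odd, $2(t-1)S(q)$ has all coefficients divisible by $4$, so $2tS(q)\equiv 2S(q)\pmod 4$ term by term, and hence $\phi(q)^t \equiv 1 + 2S(q) = \phi(q) \pmod 4$.

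The second main step is to show each factor with $i \geq 1$ is $\equiv 1 \pmod 4$. Writing $\phi(q^{2^i}) = 1 + 2S(q^{2^i})$, the binomial expansion of $\left(1 + 2S(q^{2^i})\right)^{2^i t}$ has linear term $2^{i+1}t\, S(q^{2^i})$, which is divisible by $4$ because $i\geq 1$, while all higher-order terms are divisible by $4$ as before; thus the whole product over $i\geq 1$ is $\equiv 1 \pmod 4$. As in Theorem \ref{characterization_mod2}, the infinite product causes no difficulty over the ring of formal power series, since any fixed power of $q$ receives contributions from only finitely many factors.

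Combining the two steps gives $\sum_{n\geq 0}\overline{p}_{-t}(n)q^n \equiv \phi(q) = 1 + 2\sum_{k\geq 1} q^{k^2} \pmod 4$, and reading off the coefficient of $q^n$ yields exactly the three cases of the statement. I do not expect a genuine obstacle here: the only points needing a little care are the term-by-term reduction of the coefficient $2t$ modulo $4$ (legitimate because $2(t-1)S(q)\equiv 0$) and the standard remark about convergence of the infinite product as formal power series; everything else is routine binomial bookkeeping.
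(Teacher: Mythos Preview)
Your proof is correct and follows essentially the same route as the paper's: split off the $i=0$ factor in the product of Theorem~\ref{altgenfn}, observe that every factor with $i\geq 1$ is $\equiv 1\pmod{4}$, and reduce $\phi(q)^t$ via the binomial theorem to $1+2t\sum_{k\geq 1}q^{k^2}\equiv 1+2\sum_{k\geq 1}q^{k^2}\pmod{4}$ since $t$ is odd. The only cosmetic difference is that you phrase the first step as $\phi(q)^t\equiv\phi(q)\pmod{4}$ before handling the tail, whereas the paper drops the tail first and then expands; the substance is identical.
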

\begin{proof}
Extending the logic from the proof of Theorem \ref{characterization_mod2}, we have 
\begin{eqnarray*}
\sum_{n\geq 0} \overline{p}_{-t}(n)q^n 
&=& 
\left(  \prod_{i\geq 0} \phi(q^{2^i})^{2^i} \right)^t \\
&=& 
\left(  \prod_{i\geq 0} \left( 1+2\sum_{k=1}^\infty (q^{2^i})^{k^2} \right)^{2^i} \right)^t \\
&=& 
\left( 1+2\sum_{k=1}^\infty q^{k^2} \right)^t\left(  \prod_{i\geq 1} \left( 1+2\sum_{k=1}^\infty (q^{2^i})^{k^2} \right)^{2^i} \right)^t \\
&\equiv & 
\left( 1+2\sum_{k=1}^\infty q^{k^2} \right)^t \pmod{4}\\
&=& 
\sum_{j=0}^t \binom{t}{j} 2^j \left( \sum_{k\geq 1}q^{k^2} \right)^j \\
&\equiv &
\sum_{j=0}^1 \binom{t}{j} 2^j \left( \sum_{k\geq 1}q^{k^2} \right)^j \pmod{4}\\
&=& 
1+2t\sum_{k\geq 1}q^{k^2}.
\end{eqnarray*}
The result follows given that $t$ is odd.  
\end{proof}

\vskip .25in

\item{}  Lastly, it should be clear that related results will also hold for even values of $t$; indeed, one can imagine even stronger divisibility properties in the case of $t$ even because of the divisibility by powers of 2 of the binomial coefficients that will arise in the expansion of the generating function when $t$ is even. It may be of interest to identify the corresponding divisibilities that can be guaranteed for all even values of $t$ (as a cousin to Theorem \ref{mainthm}).  We leave this as an exercise for the interested reader.  

\end{itemize}


\end{document}